\documentclass[11pt]{article}
\usepackage{amsmath}
\usepackage{amsthm}
\usepackage{amssymb}
\usepackage{graphics}
\usepackage{bm}
\usepackage{setspace}
\usepackage[numbers]{natbib}
\usepackage{authblk}

\usepackage{tikz}
\usepackage{amsfonts}
\usepackage[cal=bickham,calscaled=1.05]{mathalfa}
\usepackage{eucal}
\usepackage{latexsym}
\usepackage{mathdots}
\usepackage{mathrsfs}
\usepackage{mathtools}
\usepackage{hyphenat}
\usepackage{amscd}
\usepackage{enumerate}
\usepackage[pagebackref,hypertexnames=false, colorlinks, citecolor=blue, linkcolor=red, pdfstartview=FitB]{hyperref}
\usepackage[latin1]{inputenc}
\usepackage{tikz}
\usepackage{tikz-cd}
\usetikzlibrary{positioning}
\usepackage[numbers]{natbib}

\usepackage{fullpage}
\usepackage{color}

\newcommand{\be}{\begin{equation}}
\newcommand{\ee}{\end{equation}}

\newcommand{\ba}{\begin{eqnarray*}}
\newcommand{\ea}{\end{eqnarray*}}

\newcommand{\bal}{\begin{align}}
\newcommand{\eal}{\end{align}}
\newcommand{\baln}{\begin{align*}}
\newcommand{\ealn}{\end{align*}}

\newcommand{\bi}{\begin{itemize}}
\newcommand{\ei}{\end{itemize}}

\newcommand{\bn}{\begin{enumerate}}
\newcommand{\en}{\end{enumerate}}

\newcommand{\bbm}{\begin{bmatrix}}
\newcommand{\ebm}{\end{bmatrix}}
\newcommand{\bpm}{\begin{pmatrix}}
\newcommand{\epm}{\end{pmatrix}}
\newcommand{\bsm}{\left ( \begin{smallmatrix}}
\newcommand{\esm}{\end{smallmatrix} \right) }

\newcommand{\bp}{\begin{proof}}
\newcommand{\ep}{\end{proof}}

\newcommand{\scr}{\ensuremath{\mathscr}}

\newcommand{\mc}{\ensuremath{\mathcal}}
\newcommand{\mf}{\ensuremath{\mathfrak}}
\newcommand{\mb}{\ensuremath{\mathbb }}

\newcommand{\Ga}{\ensuremath{\Gamma}}

\newcommand{\om}{\ensuremath{\omega}}

\newcommand{\Addresses}{{
  \bigskip

Fouad~Naderi, \textsc{Department of Mathematics, University of Manitoba, Winnipeg, Manitoba, R3T 2M6 Canada}\par\nopagebreak
\textit{E-mail address:} \texttt{naderif@myumanitoba.ca}

Yong~Zhang, \textsc{Department of Mathematics, University of Manitoba, Winnipeg, 
Manitoba, R3T 2M6 Canada}\par\nopagebreak
\textit{E-mail address:} \texttt{Yong.Zhang@umanitoba.ca}

\vspace{1cm}

}}

\newtheorem{thm}{Theorem}

\newtheorem{lemma}{Lemma}

\newtheorem{cor}{Corollary}
\newtheorem*{thm*}{Theorem}

\theoremstyle{definition}
\newtheorem{defn}{Definition}
\newtheorem{remark}{Remark}
\newtheorem{eg}{Example}

\title{Generalized amenability in quantum groups}

\usepackage{authblk}

\author[1]{Fouad Naderi\thanks{Email: \texttt{naderif@myumanitoba.ca}}}
\author[2]{Yong Zhang\thanks{Email: \texttt{Yong.Zhang@umanitoba.ca}}}

\affil[1,2]{University of Manitoba}

\date{\vspace{-1.2cm}}

\begin{document}
\maketitle
\begin{abstract}
We study generalized amenability of locally compact quantum groups $\mathbb{G}=(\scr M, \Delta, \phi, \psi)$. 
Let $C_r^*(\mathbb{G})$ and $VN(\mathbb{G})$ denote, respectively, the C*-algebra and von Neumann algebra generated by the quantum left regular representation of $\mathbb{G}$. 
With natural covariance assumptions and a traceability condition on $VN(\mathbb{G})$, we show that if $C_r^*(\mathbb{G})$ is approximately amenable, then $\mathbb{G}$ admits an invariant quantum mean. 
We show further that the same conclusion holds if the predual algebra $\scr M_*$ has a bounded approximate identity and is approximately amenable.
\end{abstract}

\section{Introduction}
Amenability theory can be traced back to the work of Lebesgue and Banach on measure theory. 
In \cite{Neumann29}, von Neumann observed that the well-known Banach-Tarski paradox -- which demonstrates that there is no finitely additive measure defined on all subsets of $\mathbb{R}^n$ that is invariant under Euclidean motions (translations, rotations, and reflections) when $n \geq 3$ -- stems from the fact that the free group $\mathbb{F}_2$ on two generators lacks a property that he called \emph{amenability}.
Following von Neumann, Mahlon Day \cite{Day_amn.sg} established the general theory of amenability on groups. 

Let $G$ be a locally compact group, and $L^{\infty}(G)$ the Banach space of all essentially bounded Haar-measurable functions on $G$.  A positive linear functional $\mu$ on $L^{\infty}(G)$ is a {\it mean} if $\|\mu\| =\mu(1)=1$. 
A mean $\mu$ is  a left invariant mean (LIM) if $\mu(L_x f)=\mu(f)$ for all $x\in G$, where $L_x$ ($x\in G$) is the left translation operator on $L^{\infty}(G)$ defined by $L_x f(y) = f(xy)$ for all $y \in G$.
The group $G$ is called \emph{amenable} if there exists a LIM on $L^{\infty}(G)$. 

Johnson \cite{John_cohom} introduced the notion of amenability for Banach algebras and showed that a locally compact group $G$ is amenable if and only if the associated convolution algebra $L^1(G)$ is amenable as a Banach algebra. 

Let $A$ be a Banach algebra, and $X$ be a Banach $A$-bimodule. 
A linear map $D: A \to X$ is a {\it derivation} if it satisfies the Leibniz product rule
    $$D(ab)=D(a) \cdot b + a\cdot D(b), \qquad \forall\, a, b \in A.$$
For any $x \in X$,  the mapping 
  $ad_{x} : A \to X$ defined by
        $ad_{x}(a) = a \cdot x - x \cdot a$  is a continuous derivation, called an {\it inner derivation}. 
        
 Given a Banach $A$-bimodule $X$,  its dual space $X^*$ is natunally  a Banach $A$-bimodule with module actions defined by
\[
\langle  f\cdot a, ~ x \rangle = \langle f, ~a \cdot x \rangle, \quad 
\langle  a\cdot f, ~ x \rangle = \langle f, ~x \cdot a \rangle
\]
for $a\in A, f\in X^*$ and $x\in X$.
Here $\langle \cdot, \cdot \rangle$ denotes the bilinear mapping of the duality action $X^* \times X \to \mathbb C$ speciafied by $\langle  f, ~ x \rangle = f(x)$. 

Johnson's amenability is defined as follows: 
\begin{defn} \label{amenable_B.alg}
    A Banach algebra $A$ is {\it amenable} if for each Banach $A$-bimodule $X$, every continuous derivation $D: A \to X^*$ is inner.
\end{defn}

Avoiding involving $X^*$,  Gourdean \cite{Gourdeau-amn} showed the following.
\begin{thm}\label{Th: Gourdean}
The Banach algebra $A$ is amenable if and only if for each Banach $A$-bimodule $X$, every continuous derivation $D$: $A\to X$ can be approximated pointwise by a net $(ad_{x_i})$ of inner derivations with $(x_i)\subset X$ being uniformly bounded.
\end{thm}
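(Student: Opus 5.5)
Both directions go through the standard characterization of amenability by approximate diagonals: $A$ is amenable if and only if it has a bounded approximate diagonal, that is, a bounded net $(m_\alpha)\subset A\widehat{\otimes}A$ with $a\cdot m_\alpha - m_\alpha\cdot a\to 0$ and $\pi(m_\alpha)a\to a$ for all $a\in A$. Here $\pi:A\widehat\otimes A\to A$ is the product map. I take this characterization, due to Johnson, as known. I would then pass through it in each direction.

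($\Leftarrow$) The hypothesis is applied to two carefully chosen modules.
\begin{enumerate}
\item Take $X=A$ with the trivial left action $a\cdot x=0$ and right action $x\cdot a=xa$. Then $D=\mathrm{id}_A$ is a derivation: $D(ab)=ab=0+a\,b=a\cdot D(b)+D(a)\cdot b$. The hypothesis gives bounded $x_i$ with $x_i a\to a$ up to sign, so $A$ has a bounded right approximate identity. Symmetrically, the module with the left action as multiplication and the right action zero gives a bounded left approximate identity. Hence $A$ has a bounded two-sided approximate identity $(e_j)$.
\item Take $X=\ker\pi\subset A\widehat\otimes A$ with its natural bimodule structure, and define $D(a)=a\otimes e - e\otimes a$. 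If $A$ is non-unital, I would first pass to the unitization $A^\#$, using $X=\ker\pi^\#$ and $D(a)=a\otimes 1-1\otimes a$. This $D$ is a derivation into $\ker\pi^\#$. The hypothesis yields bounded $n_i$ with $a\cdot n_i - n_i\cdot a\to a\otimes 1-1\otimes a$. Setting $m_i = 1\otimes 1 - n_i$ gives $a\cdot m_i - m_i\cdot a\to 0$ and $\pi(m_i)=1$.
\item Cut the $m_i$ back into $A\widehat\otimes A$ by forming $e_j\cdot m_i\cdot e_j$, or similar products, and take an iterated limit over $(i,j)$. The bounded approximate identity from step 1 makes this a bounded approximate diagonal for $A$.
\end{enumerate}
Step 3 is the main technical obstacle, because the commutator and product estimates must be made uniform in the double index. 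I would handle it in the usual way: fix a finite set of elements of $A$ and a tolerance $\varepsilon$, then choose $j$ first and $i$ afterwards.

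($\Rightarrow$) Let $(m_\alpha)$ be a bounded approximate diagonal with $m_\alpha=\sum_k a_k^\alpha\otimes b_k^\alpha$.
\begin{enumerate}
\item Given a derivation $D:A\to X$, set $x_\alpha = \sum_k a_k^\alpha\cdot D(b_k^\alpha)$. This is bounded by $\|D\|\sup_\alpha\|m_\alpha\|$.
\item A Leibniz computation gives
\[a\cdot x_\alpha - x_\alpha\cdot a = \Psi(a\cdot m_\alpha - m_\alpha\cdot a) + \pi(m_\alpha)\,a\cdot D\text{-terms},\]
where $\Psi(u\otimes v)=u\cdot D(v)$ is bounded. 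The first term tends to $0$. The second term is a main part of the form $-\sum_k a_k b_k\cdot D(a)$, which arises from $D(b_ka)=D(b_k)a+b_kD(a)$, i.e.\ $\pi(m_\alpha)\cdot D(a)$.
\item It remains to show $\pi(m_\alpha)\cdot D(a)\to D(a)$. This fails in general unless $X$ is neo-unital, so I would first reduce to neo-unital modules. This is the delicate point of this direction. The reduction uses Cohen factorization together with the bounded approximate identity $\pi(m_\alpha)$, in the manner of Johnson's essential-module argument.
\item With this in place, $-ad_{x_\alpha}\to D$ pointwise with $(x_\alpha)$ bounded, which is the required approximation.
\end{enumerate}

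Alternatively, one can take $D:A\to X\subset X^{**}$, use amenability to write $D=ad_F$ for some $F\in X^{**}$, and approximate $F$ weak$^*$ by bounded elements of $X$ via Goldstine's theorem. This only yields weak pointwise convergence. Convex combinations via Mazur's theorem then upgrade it to norm convergence, provided the convex combinations are taken over the product directed set indexed by finite subsets of $A$.
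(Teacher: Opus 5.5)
The paper gives no proof of this statement; it is quoted from Gourdeau. So the comparison below is with the standard argument. Your proposal is correct, but the load-bearing parts should be rearranged.

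\textbf{The direction ($\Rightarrow$).} Your ``alternative'' is the standard proof and is complete on its own, so it should be your main argument. Regard $D$ as a derivation into the dual module $X^{**}=(X^*)^*$ and write $D=ad_F$. By Goldstine, take a net in the ball of radius $\|F\|$ of $X$ converging weak$^*$ to $F$. Then pass to convex combinations via Mazur in $X^n$ for each finite set $\{a_1,\dots,a_n\}$.

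The diagonal route, by contrast, is unfinished at exactly the step you flag. Johnson's reduction to neo-unital modules concerns dual modules. It uses a weak$^*$ cluster point of $(D(e_\beta))$ to split off an exactly inner part, and a general $X$ does not allow this. The step can be repaired approximately. Put $e_\beta=\pi(m_\beta)$, $M=\sup_\alpha\|m_\alpha\|$, and $D_\beta=D+ad_{D(e_\beta)}$. Then $D_\beta(a)=\bigl(D(a)-D(e_\beta a)\bigr)+e_\beta\cdot D(a)+a\cdot D(e_\beta)$ and $\|D_\beta\|\le(1+2M)\|D\|$. Hence the values of $D_\beta$ lie within $\|D\|\,\|a-e_\beta a\|$ of $\operatorname{span}(A\cdot X)$. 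Run the diagonal argument with $\Psi_\beta(u\otimes v)=u\cdot D_\beta(v)$ and choose $\beta$ before $\alpha$; this gives the bounded net $\Psi_\beta(m_\alpha)-D(e_\beta)$.

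There is also a sign slip. The Leibniz computation gives $ad_{x_\alpha}(a)=\Psi(a\cdot m_\alpha-m_\alpha\cdot a)+\pi(m_\alpha)\cdot D(a)$. So it is $ad_{x_\alpha}$, not $-ad_{x_\alpha}$, that tends to $D$.

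\textbf{The direction ($\Leftarrow$).} Your construction checks out:
\begin{enumerate}
\item a bounded approximate identity from the two one-sided module structures on $A$;
\item a bounded approximate diagonal for $A^\#$ from $\ker\pi^\#$;
\item cutting down by $e_j\cdot m_i\cdot e_j$ with $j$ chosen before $i$.
\end{enumerate}
However, it is much more than needed and leans on Johnson's diagonal characterization. A direct argument works in one line. Given a dual module $X^*$ and $D:A\to X^*$, the hypothesis gives a bounded net $(f_i)$ with $ad_{f_i}\to D$ pointwise in norm. By Banach--Alaoglu, $(f_i)$ has a weak$^*$ cluster point $f$. Since $g\mapsto a\cdot g-g\cdot a$ is weak$^*$-continuous on $X^*$, we get $D=ad_f$. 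So amenability follows directly from the definition. The bounded approximate diagonal your route produces is equivalent to amenability anyway and buys nothing extra here.

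A minor point: $x_ia\to-a$ makes $(-x_i)$ a \emph{left} approximate identity in the usual convention, not a right one.
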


Johnson's seminal work sparked extensive research on amenability in Banach and operator algebras, leading to a rich and profound theory with numerous developments and applications (see \cite{ Dales_book} and \cite{Con-amn-nucl, ES-AKM, DQV-ABPC, Hag-nucl-amn}). 

Dropping the boundedness requirement of the net $(x_i)$ in Theorem~\ref{Th: Gourdean}, the notion of approximate amenability was introduced and investigated in  \cite{GL_Gen.amn, GLZ_Gen.amn} (see also \cite{CGZ_psd_amn, DLZ06, GS_app_psd_AG, GZ_psd}).

For a Banach algebra $A$, the projective tensor product $A{\otimes}_\gamma A$ is a Banach $A$-bimodule under the canonical actions specified by:
\[ 
    a\cdot (b\otimes c)=a\,b \otimes c, ~~\text{and} ~~ (b\otimes c)\cdot a = b \otimes c\,a, \quad \forall\, a, b, c \in A.
\]
We recall that the norm of $A{\otimes}_\gamma A$ is given by
$$\|u\|_\gamma = \mathrm{inf}\left\{ \sum_{i=1}^{\infty} \|a_i \| \| b_i\| ~ :~  u = \sum_{i=1}^{\infty} a_i \otimes b_i \right\}, \qquad (u \in A{\otimes}_\gamma A).$$
The {\it multiplication operator} $\pi$: $A{\otimes}_\gamma A\to A$ is the linear mapping determined by:
\[
    \pi(a\otimes b)=a\,b \quad (a,b\in A).
\]
The first and the second dual operators of $\pi$ are denoted by $\pi^*$: $A^* \to (A{\otimes}_\gamma A)^*$ and $\pi^{**}$: $(A{\otimes}_\gamma A)^{**} \to A^{**}$. They are all $A$-module homomorphisms.

\begin{defn} \label{appr-amn-defn}
Let $A$ be a Banach algebra.
\begin{enumerate} 
    \item $A$ is \emph{approximately amenable} if for each Banach $A$-bimodule $X$ and any continuous derivation $D: A \to X$,  there is a net $(x_i)_{i \in I}\subset X$ (possibly unbounded) such that
$$
D(a) = \lim_{i} ~(a \cdot x_i - x_i \cdot a) \quad  (a \in A)
$$
in the norm topology of $X$.
\item  $A$ is \emph{weak* approximately amenable} if for each Banach $A$-bimodule $X$ and any continuous derivation $D: A \to X^*$ there is a net $(f_i)_{i \in I}$ in $ X^*$ such that
$$
D(a) =\text{w*-} \lim_{i} ~ (a \cdot f_i - f_i \cdot a) \quad ( a \in A),
$$
where w* denotes the weak* topology of $X^*$.
\item $A$ is \emph{pseudo-amenable} if there exists a net $(u_i)\subset A \,{\otimes}_\gamma\, A$ (possibly unbounded),  called an \emph{approximate diagonal} for $A$, such that
$$\lim_i a \cdot u_i - u_i \cdot a = 0
    \quad\text{and}\quad
   \lim_i \pi(u_i) a = a \quad (a\in A)$$
in the norm topology of the respective spaces.
\end{enumerate}
\end{defn}

We refer the reader to \cite{GL_Gen.amn,GLZ_Gen.amn,GZ_psd,Zh_sol.unsol} for examples of these notions and their interrelations. 
We highlight here that, by \cite{GLZ_Gen.amn}, approximate amenability and weak* approximate amenability are equivalent. 
Moreover,  if $A$ has a bounded approximate identity,  then approximate amenability of $A$ implies pseudo-amenability of $A$. 
The converse also holds,  as was recently clarified in \cite{G-S-Z}.
 Inparticular, a unital Banach algebra is approximately amenable if and only if it is pseudo-amenable.


A question posed by Ghahramani, Zhang, and others asks whether an approximate amenable C*-algebra must be amenable (see \cite[Question 9.4]{GL_Gen.amn}, \cite[Section 5]{GZ_psd}, \cite[Questions 16 \& 17]{Zh_sol.unsol}, and \cite[Question 3.5]{Zhang-app-psd-amn}).

We study C*-algebras associated with quantum groups, focusing on the relationship between the approximate amenability of such algebras and the amenability of the underlying quantum groups.

In the remaider of this section, we briefly review some necessary notions from von Neumann algebras and quantum groups. The standard references for us are \cite{Davidson_C*, Kadison_Fundamentals_II, Murphy,Takesaki_I, Takesaki_II} and \cite{ES-Kac-alg, VD-qg-vN, KV-qg-lcptq,KV-qg-vN, QG-Timmer, Tomatsu-amn-dscr-qg}.


     Let $\mathscr{M}$ be a von Neumann algebra and $\mathscr{M}^{+}$ be its cone of positive elements. A function $\phi : \mathscr{M}^{+} \longrightarrow [0, +\infty]$ is  a {\it weight} if it satisfies the following two conditions.
     \begin{enumerate}
         \item 
         $\phi(x+y)=\phi(x)+\phi(y)$ for $x, ~ y \in \mathscr{M}^{+}$;
         \item 
         $\phi(\lambda x)=\lambda \phi(x)$ for $\lambda \geq 0$ and $x \in \mathscr{M}^{+}$.
     \end{enumerate}
 Given a weight $\phi$, the following sets are important to us:
    \begin{itemize}
        \item $\mathfrak{p}_{\phi}=\{ x \in \mathscr{M}^{+} \, : \, \phi(x) < \infty \} $, and the linear span of $\mathfrak{p}_{\phi}$ is denoted by $\mathfrak{m}_{\phi}$.
        \item  $\mathfrak{n}_{\phi} =\{  x \in \mathscr{M} \, : \, \phi(x^* x) < \infty \}$, which is a left ideal in $\mathscr{M}$. 
     \end{itemize}
     
In fact, $\mathfrak{m}_{\phi} =\text{span}(\mathfrak{n}_{\phi}^{*}\mathfrak{n}_{\phi}) =\{ \sum_{j=1}^{n} y_{j}^{*}x_j \, : \, n\in \mathbb{N}, ~ x_j, y_j \in \mathfrak{n}_{\phi} \}$, and it is a $\ast$-subalgebra of $\mathscr{M}$ such that $\mathfrak{m}_{\phi} \cap \mathscr{M}^{+}=\mathfrak{p}_{\phi}$ (see  \cite[Lemma VII-2.1]{Takesaki_II} and \cite[P. 842]{KV-qg-lcptq}).  
One  can extend the weight $\phi$ to a linear functional on $\mathfrak{m}_{\phi}$. 
We call the weight $\phi$
     \begin{itemize}
     \item {\it finite} if $\mathfrak{p}_{\phi}=\mathscr{M}^{+}$, which is equivalent to $\phi(1) < \infty$;
     \item {\it semifinite} if $\mathfrak{m}_{\phi}$ is weak* dense in $\mathscr{M}$, which is equivalent to saying that every nonzero $x \in \mathscr{M}^{+}$ majorizes some $y \in \mathscr{M}^{+}$ such that  $y\neq 0$ and $\phi(y) < \infty$;
     \item {\it faithful} if $\phi(x) \neq 0$ for all $x \in \mathscr{M}^{+}$ with $x\neq 0$;
     \item {\it normal} if it has the monotone convergence property for  bounded increasing nets, i.e., for any bounded increasing net $(x_i)\subset \mathscr{M}^{+}$, we have $\phi(\mathrm{sup}_i ~ x_i)=\mathrm{sup}_i ~\phi(x_i)$;
     \item  {\it tracial } if it  satisfies 
          $\phi(x^*x)=\phi(xx^*)$ for all $ x \in \mathscr{M}$.
     \end{itemize}

  A Hopf-von Neumann algebra is  a von Neumann algebra $\mathscr{M}$ equipped with a co-multiplication $\Delta: \mathscr{M} \longrightarrow \mathscr{M} \overline{\otimes} \mathscr{M} $,  which is a normal, unital and injective $\ast$-homomorphism satisfying 
\[
(\Delta \otimes \mathrm{id})\Delta = (\mathrm{id} \otimes \Delta)\Delta.
\]  
 Here, $\overline\otimes$ denotes the von Neumann algebra tensor product and $\mathrm{id}$ denotes the identity map on $\mathscr{M}$. We will write $(\mathscr{M}, \Delta)$ for the Hopf-von Neumann algebra.  
 Let $\mathscr{M}_{*}$ be the predual of $\mathscr{M}$. Then $\Delta$ determines a preadjont $\Delta_{*}$: $\mathscr{M}_{*} \otimes_{\gamma} \mathscr{M}_{*} \to\mathscr{M}_{*}$ defined by $\langle \Delta_*(\Phi), m\rangle = \langle \Phi, \Delta(m)\rangle$ for $\Phi\in \mathscr{M}_{*} \otimes_{\gamma} \mathscr{M}_{*}$ and $m\in \mathscr M$.
This preadjoint yields a product $\star$ on $\mathscr M_*$ defined by
\[
   \langle f \star g ,~ m \rangle =\langle \Delta_{*}( f \otimes g) ,~ m\rangle , \quad (f,g\in \mathscr M_*,  m \in \mathscr{M}).
\]
With this product, $\mathscr{M}_{*}$ becomes a  Banach algebra.
Let $1 = 1_\mathscr{M}$ denote the identity of $\mathscr{M}$.  Then
\begin{equation} \label{multiplicative-unit}
    \langle f \star g ,~ 1 \rangle = \langle f, 1 \rangle \langle g, 1 \rangle,
\end{equation}
 i.e.  $1$ is multiplicative on $(\mathscr{M}_{*}~, \star)$.

 Let $\mu \in \mathscr{M}^{*}$ be a state on $(\mathscr{M}, \Delta)$.
We call $\mu$ a {\it left invariant quantum mean (LIQM)} 
    (or simply a {\it left invariant mean (LIM)}) if
   $$\mu\big((\omega \otimes \mathrm{id})\Delta(F)\big)
    = \omega(1)\,\mu(F)\,
    \qquad (F \in \mathscr{M},~ \omega \in \mathscr{M}_{*}),$$
    which is equivalent to 
 $\omega \cdot \mu = \omega(1)\mu$ for all $\omega \in \mathscr{M}_{*}$, where $\cdot$ denotes the module action of $\mathscr{M}_*$ on $\mathscr M^*$.

We call $\mu$ a {\it right invariant quantum mean (RIQM)} 
    (or simply a {\it right invariant mean (RIM)}) if
    $$\mu\big((\mathrm{id} \otimes \omega)\Delta(F)\big)
    = \mu(F)\,\omega(1),
    \qquad (F \in \mathscr{M},~ \omega \in \mathscr{M}_{*}),$$
    which is equivalent to
   $\mu \cdot \omega = \omega(1)\mu$ for all $\omega \in \mathscr{M}_{*}$.

Denote by $\chi$ the flip map  on $\mathscr{M} \overline{\otimes}  \mathscr{M}$, which  is  defined by $\chi(a\otimes b)=b\otimes a$. for $a, b\in \mathscr M$.  Given a Hopf-von Neumann algebra $(\mathscr{M}, \Delta)$, a co-involution on $\mathscr{M}$ is a unital involutive  normal anti-automorphism $\kappa$ of order 2  that satisfies
$$(\kappa \otimes \kappa)\circ \Delta = \chi \circ \Delta \circ \kappa. $$ 
If such co-involution exists, we call the triple $(\mathscr{M}, \Delta, \kappa)$ 
a {\it co-invollutive Hopf-von Neumann algebra}.  In this case, $\mathscr{M}_{*}$ becomes an involutive Banach algebra with the involution $\omega^*$ defined by
$$
\langle \omega^{*}, m \rangle :=\overline{\langle \omega, \kappa(m)^* \rangle } \quad  (\omega \in \mathscr{M}_{*} ~, ~  m \in \mathscr{M}), 
$$
where $\kappa(m)^*$ is the usual adjoint of the element $\kappa(m) \in \mathscr{M}$.  We refer the reader to \cite[Section 1.2]{ES-Kac-alg} for details,  and we note that in general, $\kappa$ is distinct from the natural adjoint of the von Neumann algebra.  For example, Let $G$ be a locally compact group. then the function algebra $L^{\infty}(G)$ is a Hopf-von Neumann algebra with the co-multiplication defined by 
\begin{equation}\label{E: Delta for G}
\Delta(F)(s,t) = F(st) \quad (F\in L^\infty(G) \quad s,t\in G). 
\end{equation}
Define 
\begin{equation}\label{E:kappa for G}
\kappa(F)(t)=F(t^{-1})  \quad (F \in L^{\infty}(G),  t \in G).
\end{equation}

 Then, $\kappa$ is a co-involution on $L^{\infty}(G)$ \cite[Example 1.2.9]{ES-Kac-alg}. We have $\omega^*(t)=\Delta_G(t)^{-1}\overline{\omega(t^{-1})}$ for $\omega \in L^1(G)$ and $t \in G$, where $\Delta_G$ is the modular function of $G$.

  A {\it Kac algebra} is a quadruple $\mathbb{K}=(\mathscr{M}, \Delta, \kappa, \phi)$ (\cite[Definition 2.2.1]{ES-Kac-alg} and \cite[P. 40]{QG-Timmer}),  where $(\mathscr{M}, \Delta, \kappa)$ is a co-involutive Hopf-von Neumann algebra, and $\phi$ is a normal, faithful, semi-finite weight on $\mathscr{M}^+$ with the following {\it left invariance property}.
 $$(\mathrm{id} \otimes \phi)\Delta(x)=\phi(x) 1 \qquad (x \in \mathfrak{p}_{\phi}).$$
 The Kac algebra $\mathbb{K}$ is {\it discrete} if $\mathscr{M}_{*}$ is unital, and it is  {\it  unimodular} if $\phi$ is invariant under $\kappa$, i.e., if $\phi \circ \kappa(x) =\phi(x)$ for all $x\in \scr M^+$. 
 From \cite[Corollary 6.3.4]{ES-Kac-alg},  a discrete Kac algebra $\mathbb{K}=(\mathscr{M},  \Delta, \kappa, \phi)$ is unimodular, and the predual $\scr M_*$ is a Banach $\ast$-algebra under its involution induced by $\kappa$ \cite[P. 15-16]{ES-Kac-alg}.

 The following lemma  is readily checked.

 \begin{lemma} \label{unit-of-predual}
  Let $\mathbb{K}=(\mathscr{M},  \Delta, \kappa, \phi)$  be a discrete Kac algebra.  If $\omega \in \mathscr{M}_{*}$ is a positive linear functional on $\mathscr{M}$ and is unitary in the sense that $\omega \star \omega^{*}=\omega^{*} \star \omega = 1$, then $\omega$ is a normal state on $\mathscr{M}$, i.e., we have $\|\omega\|=\omega(1)=1$.
\end{lemma}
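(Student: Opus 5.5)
Here ``$=1$'' in $\omega\star\omega^*=\omega^*\star\omega=1$ means the unit $e$ of the unital Banach algebra $\mathscr M_*$, which exists because $\mathbb K$ is discrete. The plan is to pair everything with $1_{\mathscr M}$ and use two facts: the multiplicativity \eqref{multiplicative-unit} of $1$ on $(\mathscr M_*,\star)$, and the positivity of $\omega$, which gives $\|\omega\|=\omega(1)$. The conclusion then reduces to showing $\omega(1)=1$.

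\emph{Step 1: $e(1)=1$.} Applying \eqref{multiplicative-unit} to $e\star e=e$ gives $e(1)^2=e(1)$, so $e(1)\in\{0,1\}$. If $e(1)=0$, then for every $f\in\mathscr M_*$ we get $f(1)=(f\star e)(1)=f(1)e(1)=0$. This is absurd, since $\mathscr M_*$ contains normal states, for instance any vector state. Hence $e(1)=1$.

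\emph{Step 2: $\omega^*(1)=\overline{\omega(1)}$.} By the definition of the involution,
\[
\langle\omega^*,1\rangle=\overline{\langle\omega,\kappa(1)^*\rangle}=\overline{\omega(1)},
\]
because $\kappa$ is unital and $1^*=1$. Since $\omega$ is positive, $\omega(1)\ge 0$ is real, so $\omega^*(1)=\omega(1)$.

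\emph{Step 3: conclude.} Pairing $\omega\star\omega^*=e$ with $1$ and using \eqref{multiplicative-unit} together with Steps 1 and 2 gives
\[
\omega(1)^2=\omega(1)\,\omega^*(1)=e(1)=1.
\]
Since $\omega(1)\ge0$, this forces $\omega(1)=1$. A positive normal functional satisfies $\|\omega\|=\omega(1)$, so $\|\omega\|=\omega(1)=1$, and $\omega$ is a normal state.

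The only step requiring any care is Step 1, namely identifying the abstract unit $e$ of $\mathscr M_*$ and showing $e(1)=1$ rather than $0$. The argument above settles this using only \eqref{multiplicative-unit} and the existence of a normal state. One could alternatively note that $e$ is the counit, but that is not needed.
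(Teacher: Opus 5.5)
Your proof is correct. The paper gives no argument (it calls the lemma ``readily checked''), and your route is exactly the routine verification intended: pair $\omega\star\omega^*=1$ with $1_{\mathscr M}$ via \eqref{multiplicative-unit}, then use $\omega^*(1)=\overline{\omega(1)}$ and $\|\omega\|=\omega(1)$ for positive $\omega$. Your Step 1, which proves $e(1)=1$ directly rather than appealing to $e$ being the counit, cleanly fills the one point the paper leaves implicit.
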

 
In the above and throughout the paper, we occasionally abuse the notation $1$ to denote different units, such as the identity of the predual, the identity of the von Neumann algebra, or the scalar unit. The intended meaning should be clear from the context.

For a discrete Kac algebra $\mathbb{K} = (\mathscr{M}, \Delta, \kappa, \phi)$, we will use the following notations:
\begin{itemize}
    \item $\mathcal{U}(\mathscr{M}_*)^+$: the set of unitary elements of $\mathscr{M}_*$ which are positive linear functionals on $\mathscr{M}$;
    \item $\mathcal{S}(\mathscr{M}_*)^+$: the set of normal states on $\mathscr{M}$.  
\end{itemize}
By Lemma \ref{unit-of-predual}, we have that
$\mathcal{U}(\mathscr{M}_*)^+ \subseteq \mathcal{S}(\mathscr{M}_*)^+$.

From \cite{KV-qg, KV-qg-vN},
a {\it locally compact quantum group} is a quadruple $\mathbb{G}=(\mathscr{M}, \Delta, \phi, \psi)$ for which $(\mathscr{M}, \Delta)$  is a Hopf-von Neumann algebra,  and $\phi$ and  $\psi$ are normal faithful semi-finite weights on $\mathscr{M}^{+}$ that are, respectively, left and right invariant in the sense that
\[
\phi[(\omega\otimes\mathrm{id})\Delta(x)]= \omega(1) \phi(x), \quad  \psi[(\mathrm{id} \otimes \omega)\Delta(x)]=\omega(1) \psi(x)
 \] 
 for all $\omega \in \mathscr{M}_{*}^+$ and all $x\in \scr M^+$. 
 We call  such $\phi$ (resp. $\psi$) a {\it quantum left (resp. right) Haar weight}.
$\mathbb G$ is called compact  if its left Haar weight $\phi$ is bounded, i.e., $\phi$ is a LIM defined on the whole $\mathscr M$.


 For a locally compact group $G$,  $L^{\infty}(G)$ and the group von Neumann algebra $VN(G)$ are typical examples of locally compact quantum groups. 
 In general, any Kac algebra $\mathbb{K} = (\mathscr{M}, \Delta, \kappa, \phi)$ is  a locally compact quantum group by the setting $\mb G =(\mathscr{M}, \Delta, \phi , \phi\circ\kappa)$ (see \cite[Chapter 6]{ES-Kac-alg} and \cite{KV-qg}).

Given a locally compact quantum group $\mathbb{G}=(\mathscr{M}, \Delta, \phi, \psi)$,  
consider the canonical GNS construction with respect to the normal faithful semifinite weight $\phi$. The equation
 $$\langle x , | \, y\rangle = \phi(y^* x) \quad (x, y \in \mathfrak{n}_{\phi})$$ 
defines an inner product on $\mf{n}_\phi$. Completing it, we obtain a Hilbert space $\scr H_\phi$. 
The multiplication operator $\pi_\phi$: $\scr M \to B(\scr H_\phi)$ defined by 
$$\pi_\phi(a)(b) = ab \quad (a\in \scr M, b\in \mf{n}_\phi)$$ 
is a faithful, normal $\ast$-representation of $\scr M$, called the GNS representation  (\cite[Proposition VII-1.4]{Takesaki_II}). We have $\mathscr{M}\cong \pi_\phi(\mathscr{M})$, and we may regard $(\scr M,\scr H_\phi, \pi_\phi)$ as the standard form of $\scr M$. 
In $\scr H_\phi$, an element $x$ of $\mf n_\phi$ will be denoted by $\Lambda_\phi(x)$.
By \cite[Theorem 1.2]{KV-qg-vN}, there is a unitary operator
$ W$ on $\scr H_\phi \otimes_2 \scr H_\phi$, 
called the {\it multiplicative unitary} 
for $\mathbb{G}$,
such that $W\in \mathscr{M}\otimes \mathscr{B}(\scr H_\phi)$ and
$$ W^*[\Lambda_\phi(x) \otimes \Lambda_\phi(y)] = (\Lambda_\phi \otimes \Lambda_\phi)[\Delta(y)(x\otimes 1)], \quad \forall x,y \in \mathfrak{n}_{\phi}\,.$$

As a concrete example, consider $\mathscr{M}=L^\infty(G)$, where $G$ is a locally compact group.  Let $\phi$ be the weight on $\scr M$ defined by the Haar measure on $G$. Then $\scr H_\phi = L^2(G)$ and the multiplicative unitary for $\scr M$ is $W_G$: $L^2(G \times G)=L^{2}(G)\otimes_2 L^{2} G)\to  L^2(G \times G)$ with
\[
      W_G[h](s, \,t)= h(s,\, s^{-1}t)
\]
for all $h\in L^2(G\times G)$ and all $s,\,t \in G$.

\section{Covariant quantum groups}\label{sec:quantum-self-covariant-dynamical-systems}

Let $\mathbb{G}=(\mathscr{M}, \Delta, \phi, \psi)$ be a locally compact quantum group and $W$ the multiplicative unitary of $\mb G$.
By \cite[Proposition~3.17]{KV-qg} (see also \cite[P. 71]{KV-qg-vN}),  we have $\Delta(x)=W^*(1\otimes x)W$ for all $x \in \scr M$. 
Moreover, by \cite[P. 74]{KV-qg-vN}, one can define an injective algebra homomorphism
$ \lambda$: $\scr M_* \to \mathscr{B}(\scr H_\phi)$ such that
\[
   \lambda(\omega) = (\omega \otimes \mathrm{id})(W),
\]
where $\omega \otimes \mathrm{id}: \mathscr{M}\otimes \mathscr{B}(\scr H_\phi) \to \mathscr{B}( \scr H_\phi)$ is the slicing map defined by  $ \omega \otimes \mathrm{id}(a\otimes T)=\omega(a) \,T $ for all $a\in \mathscr{M}$ and $T\in \mathscr{B}(\scr H_\phi)$.  
Here we note that, in general, $\lambda$ may not be a $*$-homomorphism.
We call $\lambda$ the {\it left regular representation} of $\scr M_*$ associated to $\mb G$, and we
denote by $C_{r}^{*}(\mb G)$ and $VN(\mb G)$ the $C^{*}$-algebra and the von Neumann algebra generated by the range of $\lambda$, respectively. 

A family 
$\Ga\subset \mathcal{S}(\mathscr{M}_*)^+$ 
is called {\it regular} if $\operatorname{span}(\Gamma)$ is norm dense in $\scr M_*$ and $\lambda(\Ga)$ contains only unitary elements of $C_{r}^{*}(\mathbb{G})$. 
This condition immediately implies that $C_{r}^{*}(\mathbb{G})$ is unital, and hence $\mb G$ must be discrete.

For a discrete Kac algebra $(\mb K, \Delta, \kappa, \phi)$,  it is clear that $\Ga =  U(\scr M_*)^+$ is regular.

\begin{defn}\label{defn: quantum-regular-groups}
Let $\Ga\subset \mc S(\scr M_*)^+$ be regular.
We say that the quantum group $\mathbb{G}$ is {\it left-covariant over $\Ga$} if the following two conditions are satisfied.
\begin{enumerate}[(i)]
\item    For each $\omega\in\Gamma$, the left action $L_\om$ on $\mathscr{M}$ defined by  $L_\omega(F) =\omega\cdot F$
is a normal $\ast$-automorphism.
\item   The {\it left intertwining} identity
\begin{equation}\label{left-Gamma-covariance}
\pi_{\phi}(\omega\cdot F)=
\lambda(\omega)\pi_{\phi}(F)\lambda(\omega)^{*} \qquad (F\in\mathscr{M},\ \omega\in\Gamma)
\end{equation}
holds.
\end{enumerate}

Similarly, we say that $\mathbb{G}$ is {\it right-covariant over $\Ga$} if the following conditions are satisfied.
\begin{enumerate}[(i)']
\item    For  $\omega\in\Gamma$, the right action $R_\om$ on $\mathscr{M}$ defined by  $R_\omega(F) =F\cdot \omega$
is a normal $\ast$-automorphism.
\item  The right intertwining identity
\begin{equation}\label{right-Gamma-covariance}
    \pi_{\phi}(F\cdot\omega)
    =
    \lambda(\omega)^{*}\pi_{\phi}(F)\lambda(\omega)
    \qquad (F\in\mathscr{M},\ \omega\in\Gamma)
\end{equation}
holds.
\end{enumerate}
\end{defn}


\begin{eg} \label{KIDS-example}
For a discrete group $G$, let $\mathscr{M}=\ell^{\infty}(G)$. Then $\mathscr{M_{*}}=\ell^1(G)$. 
With $\Delta$ and $\kappa$ being defied by \eqref{E: Delta for G} and  \eqref{E:kappa for G} and the translation-invariant normal faithful semi-finite weight  implemented by the counting measure $\phi$, it is well-known that $(\ell^\infty(G), \Delta, \kappa, \phi)$ is a discrete Kac algebra and hence is a discrete quantum group.
The co-multiplication $\Delta$ induces the convolution product
$$(\omega*\eta)(s)=\sum_{t\in G}\omega(t)\eta(t^{-1}s)$$
 on $\ell^1(G)$. 
 The induced algebra $\scr M_*$ is indeed a Banach $*$-algebra with the involution given by
  $$\omega^*(s)=\overline{\omega(s^{-1})}.$$
The GNS representation of $\scr M$ is $\pi_{\phi}: \ell^{\infty}(G) \longrightarrow \mathscr{B}(\ell^{2}(G))$ given by the multiplication operation,
while the left regular representation of $\scr M_*$ is the unital $*$-homomorphism $\lambda$: $\ell^1(G) \to \mathscr{B}( \ell^{2}(G))$ formulated by
 \be\label{E: lambda}
     (\lambda(\omega)\xi)(s)=(\omega *\xi)(s) = \sum_{t\in G}\omega(t)\,\xi(t^{-1}s)
  \quad(\omega\in\ell^1(G),\ \xi\in\ell^2(G),\ s\in G).
\ee
Let $\Ga = \{\delta_g : g \in G\}$. Then $\Ga \subset \mc S(\ell^1(G))^+$, 
and $\Ga$ is  regular since $\Ga\subset \mc U(\ell^1(G))^+$  and $\lambda$ is a $*$-homomorphism.
In fact,  $\Ga = \mc U(\ell^1(G))^+$ (see \cite{Naderi-thesis-gen-amn} for a proof).
We show that this quantum group 
is right-covariant over $\Ga$.
  
It is readily checked that for $F\in\ell^\infty(G)$ and $g\in G$, 
$$
(F\cdot \delta_g)(s)=F(gs)
\qquad (s\in G).$$
This ensures that 
the right actions of $\delta_g$ on $\ell^\infty(G)$ is  normal $*$-automorphisms.

 We now verify the right intertwining identity.
For $F \in \ell^\infty(G)$, $\xi \in \ell^2(G)$, and $g,s \in G$, we have
$$(\pi_\phi(F)\xi)(s) = F(s)\xi(s), \qquad 
(\lambda(\delta_g)\xi)(s) = \xi(g^{-1}s), \qquad 
(\lambda(\delta_g)^*\xi)(s) = \xi(gs).$$
So
\begin{align*}
[\lambda(\delta_g)^*\pi_\phi(F)\lambda(\delta_g)\xi](s)
&=
[\pi_\phi(F)\lambda(\delta_g)\xi](gs) \\
&=
F(gs)\,(\lambda(\delta_g)\xi)(gs) \\
&=
F(gs)\,\xi(g^{-1}gs) \\
&=
F(gs)\,\xi(s)\\
&=[F \cdot\delta_g ](s) \xi(s)\\
&=[\pi_\phi(F \cdot\delta_g)\xi](s).
\end{align*}
Therefore,  the right intertwining identity  holds. 
Thus, $(\ell^\infty(G), \Delta, \kappa, \phi)$ is a right- covarant quantum group over $\Gamma$.
\end{eg}

\begin{remark}
On $\ell^\infty(G)$,  we may consider the co-multiplication $\Delta': \ell^{\infty}(G) \to \ell^{\infty}(G\times G)$ defined by
 $$\Delta'(f)(s,t)=f(ts) \qquad (s,t\in G).$$
  Still, we choose the counting measure $\phi$ on $G$ for the left and right invariant weights, and $\Ga = \{\delta _g: g\in G\}$.
    Then a similar argument will conclude that the induced quantum group $(\ell^\infty(G), \Delta',\kappa,\phi)$ is left-covariant over $\Ga$. In this case, the multiplicative unitary is $W'$: $
 \ell^2(G \times G)   \to  
 \ell^2(G \times G)$ with
\[
      W'[h](s, t)= h(s,\, ts^{-1}) \quad (h\in \ell^2(G \times G), s,\,t \in G),
\]
and the left regular representation $\lambda'$: $\ell^1(G) \to \scr B(\ell^2(G))$ is given by
\[
     (\lambda'(\omega)\xi)(s)=(\xi *\omega)(s) = \sum_{t\in G}\xi(t)\,\omega(t^{-1}s)
  \quad(\omega\in\ell^1(G),\ \xi\in\ell^2(G),\ s\in G).
\]
In fact, if $\hat{G}$ denotes the group $G$ equipped with the reverse product $s\odot t=ts$, then
\[
(\ell^\infty(G),\Delta',\kappa,\phi)\cong(\ell^\infty(\hat G),\Delta,\kappa,\phi).
\]
\end{remark}

We now recall some facts from \cite[Sections III.3 and III.4]{Takesaki_I} concerning the von Neumann algebra $\scr M$ and its predue $\scr M_*$.
\begin{enumerate}
\item  Since the multiplication in $\mathscr{M}$ is weak* separately continuous,  $\mathscr{M}_*$ is an $\mathscr{M}$-bimodule. 
\item Given $\omega \in \mathscr{M}_*^+$, there is a unique projection $p \in \mathscr{M}$, called the \emph{support projection} of $\omega$, such that 
$$
p\cdot \omega = \omega \cdot p = p\cdot \omega \cdot p = \omega,
$$
and $\omega$ is faithful on the reduced von Neumann algebra $p\mathscr{M}p$.   We denote this $p$ by $s(\omega)$.

\item For each $\Omega\in\mathscr{M}_*$, there are a unique partial isometry $v\in\mathscr{M}$ and a unique $\omega\in\mathscr{M}_*^+$ such that the polar decomposition
$$\Omega=v \cdot  \omega, \qquad v^*v=s(\omega)$$
holds. We have  $\|\Omega\|=\|\omega\|$, and we denote $\omega$ by $|\Omega|$. 
\end{enumerate}

\begin{lemma}\label{essntial-inequality-quantum-covariant}
Let $\mathbb{G}=(\mathscr{M}, \Delta, \phi, \psi)$ be a locally compact quantum group and  $\Gamma\subseteq \mathcal{S}(\mathscr{M}_{*})^+$ be regular.
\begin{enumerate}[(i)]

\item If $\mathbb{G}$ is left-covariant over $\Ga$, then for $\omega\in\Gamma$ and  $\eta\in\mathscr{M}_{*}$, we have
\begin{equation}\label{modulus-left-Gamma}
|\eta\star\omega|
=|\eta|\star\omega,
\end{equation}
and
\begin{equation}\label{ineq-modulus-left-Gamma}
\| \, |\eta|\star\omega-|\eta|\, \|
\leq
2\|\eta\|^{1/2}
\|\eta\star\omega-\eta\|^{1/2}.
\end{equation}

\item If $\mathbb{G}$ is right-covariant over $\Ga$, then for $\omega\in\Gamma$ and $\eta\in\mathscr{M}_{*}$,we have
\begin{equation}\label{modulus-right-Gamma}
|\omega\star\eta|
=\omega\star|\eta|,
\end{equation}
and
\begin{equation}\label{ineq-modulus-right-Gamma}
\|\,\omega\star|\eta|-|\eta|\,\|
\leq
2\|\eta\|^{1/2}
\|\omega\star\eta-\eta\|^{1/2}.
\end{equation}
\end{enumerate}
\end{lemma}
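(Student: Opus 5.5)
The plan is to turn $\eta\star\omega$ (resp. $\omega\star\eta$) into the composition of $\eta$ with a normal $\ast$-automorphism of $\mathscr M$. The modulus identity then follows from uniqueness of the polar decomposition, and the inequality follows from a standard norm estimate for moduli of normal functionals. I treat (i) in detail; (ii) is the same argument with $R_\omega$ in place of $L_\omega$.

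\textbf{Step 1 (the product as a composition).} By the definition of the dual module action, for $F\in\mathscr M$ we have
\[
\langle \eta\star\omega, F\rangle=\langle \eta\otimes\omega,\Delta(F)\rangle=\langle \eta,(\mathrm{id}\otimes\omega)\Delta(F)\rangle=\langle\eta,\omega\cdot F\rangle .
\]
Hence $\eta\star\omega=\eta\circ L_\omega$. By left-covariance (i), $L_\omega$ is a normal $\ast$-automorphism of $\mathscr M$. Only condition (i) of Definition~\ref{defn: quantum-regular-groups} is used here; the intertwining identity (ii) is not needed for this lemma.

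\textbf{Step 2 (transporting the polar decomposition).} Write $\eta=v\cdot|\eta|$ with $v^*v=s(|\eta|)$. With the module convention $\langle v\cdot\theta,F\rangle=\theta(Fv)$, we get
\[
\eta(L_\omega(F))=|\eta|\big(L_\omega(F)\,v\big)=|\eta|\big(L_\omega(F\,L_\omega^{-1}(v))\big).
\]
Therefore
\[
\eta\star\omega=L_\omega^{-1}(v)\cdot\big(|\eta|\circ L_\omega\big)=L_\omega^{-1}(v)\cdot(|\eta|\star\omega).
\]
Three facts make this a polar decomposition:
\begin{itemize}
\item $|\eta|\circ L_\omega$ is positive and normal, because $L_\omega$ is a normal $\ast$-automorphism.
\item $L_\omega^{-1}(v)$ is a partial isometry, and its initial projection is $L_\omega^{-1}(s(|\eta|))$.
\item This projection is exactly the support of $|\eta|\circ L_\omega$, since automorphisms carry support projections to support projections.
\end{itemize}
Uniqueness of the polar decomposition then gives $|\eta\star\omega|=|\eta|\star\omega$, which is \eqref{modulus-left-Gamma}. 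Checking the support claim in the third item is the only point requiring care, and it is routine.

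\textbf{Step 3 (the inequality).} I will invoke the known estimate for normal functionals $\varphi,\psi$ on a von Neumann algebra (Powers--St{\o}rmer/Haagerup type, cf.\ \cite{Takesaki_I}):
\[
\|\,|\varphi|-|\psi|\,\|\le\big(2\|\varphi+\psi\|\,\|\varphi-\psi\|\big)^{1/2}.
\]
Apply it with $\varphi=\eta\star\omega$ and $\psi=\eta$. By Step 2, $|\varphi|=|\eta|\star\omega$. Since $\eta\star\omega=\eta\circ L_\omega$ and $L_\omega$ is isometric, $\|\eta\star\omega\|=\|\eta\|$, so $\|\varphi+\psi\|\le2\|\eta\|$. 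This gives
\[
\|\,|\eta|\star\omega-|\eta|\,\|\le 2\|\eta\|^{1/2}\|\eta\star\omega-\eta\|^{1/2},
\]
which is \eqref{ineq-modulus-left-Gamma}.

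\textbf{Part (ii).} The same computation gives $\omega\star\eta=\eta\circ R_\omega$, and right-covariance makes $R_\omega$ a normal $\ast$-automorphism. Repeating Steps 2--3 yields \eqref{modulus-right-Gamma} and \eqref{ineq-modulus-right-Gamma}.

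\textbf{Main obstacle.} The hardest step is pinning down the precise modulus inequality with the constant stated. If the cited form is unavailable, I would derive it from the Powers--St{\o}rmer inequality in the standard form, via the vector representatives of $|\varphi|,|\psi|$.
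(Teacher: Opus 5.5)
Your proposal is correct and follows the paper's proof essentially step for step. Your $L_\omega^{-1}(v)$ is the paper's partial isometry $u$ with $v=L_\omega(u)$, and uniqueness of the polar decomposition gives $|\eta\star\omega|=|\eta|\star\omega$; you additionally check the support condition, which the paper leaves implicit. The inequality then follows from the same modulus estimate combined with $\|\eta\star\omega+\eta\|\le 2\|\eta\|$.

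The only correction is bibliographic. The estimate $\|\,|\varphi|-|\psi|\,\|^2\le 2\|\varphi+\psi\|\,\|\varphi-\psi\|$ is Kosaki's inequality \cite{Kosaki84}, which the paper cites directly, not a Powers--St{\o}rmer/Haagerup bound from \cite{Takesaki_I}. You can therefore cite it outright rather than attempt the fallback derivation you sketch, which would not be routine for non-positive functionals.
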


\begin{proof}
We prove the left covariance case. The right covariance case can be proved similarly.

Let $\eta = v|\eta|$ be the polar decomposition of $\eta\in\mathscr{M}_{*}$.
For
$\omega\in\Gamma$, 
since $L_\omega$ is $*$-automorphism, we have $L_\omega(\scr M^+) = \scr M^+$, and there is a partial isometry $u\in \scr M$ such that $v=L_\omega(u)$.
From the definition, $\eta\star\omega = \eta\circ L_\omega$, and $|\eta|\star\omega = |\eta|\circ L_\omega\in \scr M_*^+$.  To obtain \eqref{modulus-left-Gamma}, it suffices to show that
\[
|\eta\circ L_\omega| = |\eta|\circ L_\omega.
\]
Denote $\rho = |\eta|\circ L_\omega$.
For  $F\in\mathscr{M}$, we have
\begin{align*}
(\eta\circ L_{\omega})(F)
&=
\eta(L_{\omega}(F))                                      
=
|\eta|(L_{\omega}(F)v)                                    
=
|\eta|\big(L_{\omega}(F)L_{\omega}(u)\big)  \\
&=
(|\eta|\circ L_{\omega})(Fu)                      
=
\rho(Fu)                                     
=
(u\cdot\rho)(F).
\end{align*}
Thus,
$$
\eta\circ L_{\omega}
=u\cdot\rho. 
$$
By uniqueness of the polar decomposition,
$$
|\eta\circ L_{\omega}| = \rho
=|\eta|\circ L_{\omega}.
$$
Therefore, \eqref{modulus-left-Gamma} holds.

To show \eqref{ineq-modulus-left-Gamma}, we first recall the  Kosaki's inequality \cite{Kosaki84}
\begin{equation}\label{Kosaki-inequality-Gamma}
\|\,|\theta|-|\eta|\,\|^{2}
\leq
2\,\|\theta+\eta\|\,\|\theta-\eta\| 
\end{equation}
for $\theta,\eta\in\mathscr{M}_{*}$.
Set $\theta=\eta\star\omega$ 
We have $\|\theta\| \leq \|\eta\|$. Then 
$$
\|\theta+\eta\| \leq \|\theta\|+\|\eta\|\leq 2\|\eta\|.$$
combining this with \eqref{modulus-left-Gamma} and \eqref{Kosaki-inequality-Gamma}, we obtain \eqref{ineq-modulus-left-Gamma}.
\end{proof}


\section{Approximate amenability of covariant quantum groups}\label{sec:approximate-amenability-discrete-quantum-groups}

In this section, we study approximate amenability in covariant quantum groups. We show that a covariant quantum group has an invariant quantum mean if $C_r^*(\mb G)$ or $\scr M_*$ is approximately amenable. This extends some known results of Ghahramani et al concerning groups.


\begin{thm}\label{Gamma-covariant-implies-RIM}
Let $\mathbb{G}=(\mathscr{M}, \Delta, \phi, \psi)$ be a locally compact quantum group and  $\Gamma\subseteq \mathcal{S}(\mathscr{M}_{*})^+$ be regular. Suppose that $VN(\mathbb{G})$ admits a normal tracial state $\tau$ and that $C_{r}^{*}(\mathbb{G})$ is approximately amenable.

\begin{enumerate}[(i)]
\item If $\mathbb{G}$ is left covariant over $\Gamma$, then $\mathscr{M}$ admits a right invariant mean. \label{left covariant}

\item If the $\mb G$ is right covariant over $\Gamma$,  then $\mathscr{M}$ admits a left invariant mean. \label{right covariant}

\end{enumerate}
\end{thm}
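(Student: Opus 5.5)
The plan follows the Ghahramani--Loy--Zhang argument for discrete groups. Approximate amenability of $C_r^*(\mathbb{G})$ is converted into an approximate diagonal. The trace $\tau$ turns this diagonal into a net of approximately invariant normal functionals on $\mathscr{M}$. Lemma~\ref{essntial-inequality-quantum-covariant} then makes them positive, and a weak$^*$ cluster point is the invariant mean. I describe case (i); case (ii) is symmetric, using \eqref{right-Gamma-covariance} and \eqref{ineq-modulus-right-Gamma}.

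\emph{Step 1 (diagonal).} Regularity of $\Gamma$ makes $A=C_r^*(\mathbb{G})$ unital, and a unital approximately amenable Banach algebra is pseudo-amenable. So there is a net $(u_i)\subset A\otimes_\gamma A$ with
\[
a\cdot u_i-u_i\cdot a\to 0 \quad (a\in A), \qquad \pi(u_i)\to 1 .
\]
For $\omega\in\Gamma$ the element $\lambda(\omega)$ is unitary. Hence
\[
\lambda(\omega)^*\cdot u_i\cdot\lambda(\omega)-u_i=\lambda(\omega)^*\bigl(u_i\cdot\lambda(\omega)-\lambda(\omega)\cdot u_i\bigr)\to 0
\]
in $\|\cdot\|_\gamma$.

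\emph{Step 2 (transfer to $\mathscr{M}_*$).} For $u=\sum_k a_k\otimes b_k$ I would define
\[
\langle \eta_u, F\rangle=\sum_k \tau\bigl(b_k\,\pi_\phi(F)\,a_k\bigr)\qquad (F\in\mathscr{M}).
\]
This requires $\tau$ to be available on the algebra generated by $VN(\mathbb{G})$ and $\pi_\phi(\mathscr{M})$ inside $B(\mathscr{H}_\phi)$, as a normal state that is still tracial where needed. In the group case this is the vector state at $\delta_e$. I expect this to be the main obstacle: the hypothesis only gives $\tau$ on $VN(\mathbb{G})$. My first attempt is to realise $\tau$ as a normal state on $B(\mathscr{H}_\phi)$ (a normal state on a von Neumann algebra is a vector state in a suitable amplification) and to check that the only trace identity actually used is $\tau(xy)=\tau(yx)$ for $x\in VN(\mathbb{G})$. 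Granting this, $\eta_u$ is normal and $\|\eta_u\|\le\|u\|_\gamma$. Moreover, by traciality
\[
\eta_u(1)=\tau\Bigl(\sum_k b_ka_k\Bigr)=\tau(\pi(u)),
\]
so $\eta_{u_i}(1)\to 1$.

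\emph{Step 3 (approximate invariance).} Applying \eqref{left-Gamma-covariance} and moving the unitaries $\lambda(\omega)^{\pm1}$ onto the tensor factors gives
\[
\langle\eta_u,\omega\cdot F\rangle=\sum_k\tau\bigl(b_k\lambda(\omega)\pi_\phi(F)\lambda(\omega)^*a_k\bigr)=\langle \eta_{\lambda(\omega)^*\cdot u\cdot\lambda(\omega)},F\rangle .
\]
Combined with Step 1 this yields $\|\eta_{u_i}\star\omega-\eta_{u_i}\|\to0$ for each $\omega\in\Gamma$. The net $(u_i)$ may be unbounded, so I normalise: put $\theta_i=\eta_{u_i}/\|\eta_{u_i}\|$. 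Then $\|\theta_i\star\omega-\theta_i\|\to 0$ requires $\|\eta_{u_i}\|\to\infty$ not to outrun the defect. I would handle this as in \cite{GLZ_Gen.amn} by passing to a subnet indexed by finite subsets of $\Gamma$ and tolerances $\varepsilon$, choosing $u_i$ so that the commutator defects are below $\varepsilon$ times $\|u_i\|_\gamma^{-1}$. This is possible because the defining limits are over arbitrary nets. I also need $\|\eta_{u_i}\|$ bounded below, which follows from $\eta_{u_i}(1)\to1$.

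\emph{Step 4 (positivity and the limit).} By \eqref{ineq-modulus-left-Gamma}, $\rho_i=|\theta_i|$ are normal states with $\|\rho_i\star\omega-\rho_i\|\to0$ for all $\omega\in\Gamma$. Since $\operatorname{span}\Gamma$ is norm dense in $\mathscr{M}_*$ and $\Gamma$ consists of states, this gives
\[
\|\rho_i\star\nu-\nu(1)\rho_i\|\to0 \qquad (\nu\in\mathscr{M}_*).
\]
Any weak$^*$ cluster point $\mu\in\mathscr{M}^*$ of $(\rho_i)$ is then a state satisfying $\mu\cdot\nu=\nu(1)\mu$, i.e.\ a right invariant mean.
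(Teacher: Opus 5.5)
Your proposal is correct and is essentially the paper's proof. The paper likewise extends $\tau$ to a normal state $\widetilde{\tau}$ on $\mathscr{B}(\mathscr{H}_\phi)$, which is standard via exactly the vector-state representation you mention. It defines $\Omega_\alpha(F)=\widetilde{\tau}\bigl(\sum_n b_n^\alpha\pi_\phi(F)a_n^\alpha\bigr)$ and uses traciality on $VN(\mathbb{G})$ only to get $\Omega_\alpha(1)=\tau(\pi(d_\alpha))$. It then bounds $\|\Omega_\alpha\star\omega-\Omega_\alpha\|$ by $\|d_\alpha\cdot\lambda(\omega)-\lambda(\omega)\cdot d_\alpha\|$ via the intertwining identity and unitarity, and finishes with Lemma~\ref{essntial-inequality-quantum-covariant} and weak$^*$ compactness.

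The one flaw is your Step 3 reindexing. It is unnecessary: dividing by $\|\eta_{u_i}\|$, which is eventually at least $1/2$, can only shrink the defect, and the paper simply normalizes $\pi(d_\alpha)=1$ to get $\|\Omega_\alpha\|\ge 1$. It is also unjustified as stated, since nothing in pseudo-amenability lets you push the commutator defect below $\varepsilon\|u_i\|_\gamma^{-1}$.
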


\begin{proof}
We first note that $C_{r}^{*}(\mathbb{G})$ is unital, and its identity is the identity of $\mathscr{B}(\mathscr{H}_{\phi})$. 
Since $C_{r}^{*}(\mathbb{G})$ is approximately amenable, it is pseudo-amenable. 
Hence there exists an approximate diagonal
$$
d_{\alpha}
=
\sum_{n=1}^{r_{\alpha}} a_{n}^{\alpha}\otimes b_{n}^{\alpha}
\in
C_{r}^{*}(\mathbb{G})\otimes_{\gamma} C_{r}^{*}(\mathbb{G}),
\qquad r_{\alpha}<\infty,
$$
such that
\begin{equation}\label{normalized-approx-diagonal}
\pi(d_{\alpha})
=\sum_{n=1}^{r_{\alpha}}a_{n}^{\alpha}b_{n}^{\alpha}=1.
\end{equation}
Here, $\pi$: $C_r^*(\mb G)\otimes_\gamma C_r^*(\mb G) \to C_r^*(\mb G)$ is the multiplication map. 
Let $\mathscr{H}=\mathscr{H}_{\phi}$. 
Extend $\pi$ to the multiplication map
$$
\Pi:
\mathscr{B}(\mathscr{H})\otimes_{\gamma}\mathscr{B}(\mathscr{H})
\longrightarrow
\mathscr{B}(\mathscr{H}).
$$
Since the inclusion $C_{r}^{*}(\mathbb{G})\hookrightarrow \mathscr{B}(\mathscr{H})$ is contractive, we may regard $d_{\alpha}$ as an element of $\mathscr{B}(\mathscr{H})\otimes_{\gamma}\mathscr{B}(\mathscr{H})$ with $\Pi(d_{\alpha})=1$. Since $\tau$ is a normal state on $VN(\mathbb{G})$, it has a normal state extension $\widetilde{\tau}$ to $\mathscr{B}(\mathscr{H})$. So
$$
\widetilde{\tau}|_{VN(\mathbb{G})}=\tau,
\qquad
\|\widetilde{\tau}\|=\|\tau\|=\tau(1)=1.
$$
Let
$\chi$: $\mathscr{B}(\mathscr{H})\otimes_{\gamma}\mathscr{B}(\mathscr{H})
\to
\mathscr{B}(\mathscr{H})\otimes_{\gamma}\mathscr{B}(\mathscr{H})$
be the flip map.
For each $\alpha$, define a normal functional $\Omega_{\alpha}$: $\mathscr{M} \to \mathbb{C}$ by
\[
    \langle \Omega_{\alpha}(F) = \langle \tilde{\tau}, ~  \Pi \circ \chi \left(\pi_\phi(F)\cdot d_{\alpha}\right) \, \rangle
    = \langle  \tilde{\tau},~  \sum_{n=1}^{r_\alpha} b_{n}^{\alpha}\pi_\phi (F) a_{n}^{\alpha} \rangle .
\]
Noting that $\|\Pi\|=\|\chi\|=\|\pi_{\phi}\|=\|\widetilde{\tau}\|=1$, we have
$$\|\Omega_{\alpha}(F)\|
\leq \|F\|\,\|d_{\alpha}\| \qquad (F\in\mathscr{M}). $$
On the other hand, using \eqref{normalized-approx-diagonal} and the fact that $\tau$ is tracial on $VN(\mathbb{G})$, we get
\[
\|\Omega_{\alpha}\|
\geq \, |\Omega_{\alpha}(1)|
=
\left\langle
\widetilde{\tau},\, \Pi\circ\chi(d_{\alpha})
\right\rangle
=
\left\langle \tau,\, \sum_{n=1}^{r_{\alpha}} b_{n}^{\alpha}a_{n}^{\alpha}
\right\rangle 
=
\left\langle \tau,\,\sum_{n=1}^{r_{\alpha}} a_{n}^{\alpha}b_{n}^{\alpha}
\right\rangle 
=
\langle\tau,\,1\rangle=1.
\]
Thus
\begin{equation}\label{Omega-alpha-norm-bounds}
1\leq \|\Omega_{\alpha}\|\leq \|d_{\alpha}\|.
\end{equation}
For any $T \in \mathscr{B}(\mathscr{H})$ and $c \in C_{r}^{*}(\mathbb{G})$, we have
\begin{align}\label{switching-identity}
   \Pi \circ \chi (T \cdot d_{\alpha} \cdot c)&=\Pi \circ \chi (\sum Ta_{n}^{\alpha}\otimes b_{n}^{\alpha}c)
    =\Pi (\sum b_{n}^{\alpha}c \otimes Ta_{n}^{\alpha})
    = \sum b_{n}^{\alpha}[c Ta_{n}^{\alpha}] \notag \\
    &=\Pi \circ \chi (\sum [c T]a_{n}^{\alpha} \otimes b_{n}^{\alpha}))
    = \Pi \circ \chi (cT \cdot d_{\alpha}).
\end{align}

Assume that $\mathbb{G}$ is left covariant over $\Gamma$. Fix $\omega\in\Gamma$ and $F\in\mathscr{M}$. Taking $c=\lambda(\omega)$ and $T=\pi_{\phi}(F)\lambda(\omega)^{*}$ in \eqref{switching-identity}, we obtain
\begin{equation}\label{left-displacement}
\Pi\circ\chi
\bigl(
\lambda(\omega)\pi_{\phi}(F)\lambda(\omega)^{*}\cdot d_{\alpha}
\bigr)
=
\Pi\circ\chi
\bigl(
\pi_{\phi}(F)\lambda(\omega)^{*}\cdot d_{\alpha}\cdot \lambda(\omega)
\bigr).
\end{equation}
Using the intertwining identity \eqref{left-Gamma-covariance}, we compute
\begin{align*}
(\Omega_{\alpha}\star\omega-\Omega_{\alpha})(F)
&=
\Omega_{\alpha}(\omega\cdot F)-\Omega_{\alpha}(F) \\
&=
\left\langle
\widetilde{\tau},
\Pi\circ\chi
\bigl(
\pi_{\phi}(\omega\cdot F-F)\cdot d_{\alpha}
\bigr)
\right\rangle \\
&=
\left\langle
\widetilde{\tau},
\Pi\circ\chi
\bigl(
(\lambda(\omega)\pi_{\phi}(F)\lambda(\omega)^{*}
-
\pi_{\phi}(F))\cdot d_{\alpha}
\bigr)
\right\rangle .
\end{align*}
By \eqref{left-displacement} and the identity $\lambda(\omega)^{*}\lambda(\omega)=1$, we get
\begin{align*}
(\Omega_{\alpha}\star\omega-\Omega_{\alpha})(F)
&=
\left\langle
\widetilde{\tau},
\Pi\circ\chi
\bigl(
\pi_{\phi}(F)\lambda(\omega)^{*}\cdot d_{\alpha}\cdot\lambda(\omega)
-
\pi_{\phi}(F)\cdot d_{\alpha}
\bigr)
\right\rangle \\
&=
\left\langle
\widetilde{\tau},
\Pi\circ\chi
\bigl(
\pi_{\phi}(F)\lambda(\omega)^{*}\cdot
[d_{\alpha}\cdot\lambda(\omega)
-
\lambda(\omega)\cdot d_{\alpha}]
\bigr)
\right\rangle .
\end{align*}
Therefore,
$$\|\Omega_{\alpha}\star\omega-\Omega_{\alpha}\|
\leq \|d_{\alpha}\cdot\lambda(\omega)
- \lambda(\omega)\cdot d_{\alpha}\|.$$
Since $d_{\alpha}$ is an approximate diagonal for $C_{r}^{*}(\mathbb{G})$ and $\lambda(\omega)\in C_{r}^{*}(\mathbb{G})$, we have
\begin{equation}\label{Omega-left-limit}
\|\Omega_{\alpha}\star\omega-\Omega_{\alpha}\|
\longrightarrow 0,
\qquad \omega\in\Gamma.
\end{equation}
Replacing $\Omega_{\alpha}$ by $\Omega_{\alpha}/\|\Omega_{\alpha}\|$, and using \eqref{Omega-alpha-norm-bounds}, we may assume in \eqref{Omega-left-limit} that
 $\|\Omega_{\alpha}\|=1$. Let
$$\Omega_{\alpha}=v_{\alpha}\cdot\mu_{\alpha}$$
be the polar decomposition of $\Omega_{\alpha}$. Then $\mu_{\alpha}\in(\mathscr{M}^*)^+$ and
$$
\|\mu_{\alpha}\|=\mu_{\alpha}(1)=\|\Omega_{\alpha}\|=1.
$$

By Lemma~\ref{essntial-inequality-quantum-covariant}, for every $\omega\in\Gamma$,
\[
\|\mu_{\alpha}\star\omega-\mu_{\alpha}\|
\leq 2\|\Omega_{\alpha}\star\omega-\Omega_{\alpha}\|^{1/2}.
\]
We therefore have
\begin{equation}\label{mu-left-limit}
\|\mu_{\alpha}\star\omega-\mu_{\alpha}\|
\longrightarrow 0,
\qquad \omega\in\Gamma.
\end{equation}
The net $(\mu_{\alpha})$ lies in $\mc S(\scr M^*)^+$ which is
 weak$^{*}$-compact. 
 Passing to a subnet if necessary, we may assume that
$$
\mu_{\alpha}\xrightarrow{w^{*}}\mu
$$
where $\mu\in(\mathscr{M}^*)^+$ is a state.  By \eqref{mu-left-limit} we have
\[
\mu\star\omega=\mu = \omega(1) \mu
\qquad (\omega\in\Gamma),
\]
since every $\omega\in \Ga$ is a state. 
By linearity,  
\[
\mu\star\omega= \omega(1) \mu
\]
holds for
$
\omega\in \operatorname{span}(\Gamma)
$. By the density of $\Ga$,  it also holds for all $\omega\in \scr M_*$.
Thus, $\mu$ is a right-invariant mean on $\mathscr{M}$. This completes the proof for part \eqref{left covariant}.

Similarly, one can prove part \eqref{right covariant}.
\end{proof}

To conlude this paper, we discuss some known results in the field as direct applications of Theorem \ref{Gamma-covariant-implies-RIM}.


\begin{cor}[Corollary 7.3 of \cite{CGZ_psd_amn}] \label{When-reduced-group-C-star-alg-is- app.amn}
    Let $G$ be a discrete group. If $C_{r}^{*}(G)$ is approximately amenable, then $G$ is amenable. 
\end{cor}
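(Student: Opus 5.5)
The plan is to apply Theorem~\ref{Gamma-covariant-implies-RIM}\eqref{right covariant} to the discrete Kac algebra $\mathbb{G}=(\ell^\infty(G),\Delta,\kappa,\phi)$ of Example~\ref{KIDS-example}, with $\Gamma=\{\delta_g: g\in G\}$. The target is a left invariant mean on $\ell^\infty(G)$, which is exactly amenability of $G$.

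First I will identify the objects in the theorem with the classical ones. By \eqref{E: lambda}, $\lambda(\delta_g)$ is the left translation $\xi\mapsto \xi(g^{-1}\cdot)$ on $\ell^2(G)$. Since $\lambda$ is a continuous $*$-homomorphism and $\operatorname{span}\{\delta_g\}$ is dense in $\ell^1(G)$, the C*-algebra generated by $\lambda(\ell^1(G))$ is the norm closure of $\operatorname{span}\lambda(G)$. This is the reduced group C*-algebra, so $C_r^*(\mathbb{G})=C_r^*(G)$ and $VN(\mathbb{G})=VN(G)$. The hypothesis that $C_r^*(G)$ is approximately amenable is therefore precisely the hypothesis of the theorem.

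Next I will check the remaining hypotheses.
\begin{enumerate}[(a)]
\item Regularity of $\Gamma$ and right covariance over $\Gamma$ were both verified in Example~\ref{KIDS-example}.
\item A normal tracial state on $VN(G)$ is needed. For discrete $G$ I take $\tau(T)=\langle T\delta_e,\delta_e\rangle$. It is a vector state, hence normal, with $\tau(1)=1$. It is tracial because $\tau(\lambda(g)\lambda(h))=\tau(\lambda(h)\lambda(g))=[gh=e]$. This identity extends to $VN(G)$ by linearity, then norm density, then weak$^*$ density together with separate normality of $(S,T)\mapsto\tau(ST)$.
\end{enumerate}
Of these, the trace on the full von Neumann algebra is the only point needing care, and it is the step I expect to be the main obstacle. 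I would handle it by the standard argument: $\tau(ST)=\langle T\delta_e,S^*\delta_e\rangle$, and the vectors $T\delta_e$ and $S^*\delta_e$ are related to $TS$ via the right-regular commutant. Alternatively I would pass to weak$^*$ limits one variable at a time.

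Theorem~\ref{Gamma-covariant-implies-RIM}\eqref{right covariant} then gives a state $\mu$ on $\ell^\infty(G)$ with $\omega\cdot\mu=\omega(1)\mu$ for all $\omega\in\ell^1(G)$. Taking $\omega=\delta_g$ gives $\mu(F\cdot\delta_g)=\mu(F)$. By Example~\ref{KIDS-example}, $(F\cdot\delta_g)(s)=F(gs)=L_gF(s)$. Hence $\mu$ is a left invariant mean on $\ell^\infty(G)$, and $G$ is amenable.
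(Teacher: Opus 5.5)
Your proposal is correct and follows the paper's proof: you apply Theorem~\ref{Gamma-covariant-implies-RIM}, part (ii), to the right-covariant quantum group of Example~\ref{KIDS-example} with $\Gamma=\{\delta_g : g\in G\}$ and the vector trace $\tau(T)=\langle T\delta_e,\delta_e\rangle$ on $VN(G)$. The only difference is that you verify the trace property yourself (via separate weak$^*$ continuity of $(S,T)\mapsto\tau(ST)$), along with the identifications $C_r^*(\mathbb{G})=C_r^*(G)$ and $F\cdot\delta_g=L_gF$, whereas the paper cites Kadison--Ringrose for the trace and leaves the two identifications to the example and the definitions.
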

\bp
By Example \ref{KIDS-example}, the quantum group $( \ell^{\infty}(G), \Delta, \phi, \phi)$ is a  right covariant over  $\Gamma=\{\delta_g : g\in G\}$. 
By \cite[Example 8.1.4]{Kadison_Fundamentals_II}, the assignment $\tau(A)=\langle A \delta_{e}, \delta_{e} \rangle$ 
defines a faithful normal tracial state on $ VN(\mathbb{G})=VN(G)$. 
From Theorem \ref{Gamma-covariant-implies-RIM},  $\ell^{\infty}(G)$ has a LIM. Thus, $G$ is amenable.
\ep

\begin{remark} \label{When-full-group-C-star-alg-is- app.amn}
    Let $G$ be a discrete group. If the full group C*-algebra $C^{*}(G)$ is  approximately amenable, then $G$ is amenable. 
\end{remark}
\bp
 By \cite[Theorem VII.2.5]{Davidson_C*}, there is a cotinuous homomorphism from $\lambda : C^{*}(G)$ onto $C_{r}^{*}(G)$, 
 which is continuous. 
 If $C^{*}(G)$ is approximately amenable, then so is $C_{r}^{*}(G)$. 
 By  Corollary \ref{When-reduced-group-C-star-alg-is- app.amn}, $G$ is amenable.
\ep

\begin{cor} \label{Full-reduced-group-C-star-alg-of-free-group-is-NOT-app.amn}
   For any natural number $d \geq 2$, neither of the group C*-algebras $ C^{*}(\mathbb{F}_d)$ and $ C_{r}^{*}(\mathbb{F}_d)$ is approximately amenable. 
\end{cor}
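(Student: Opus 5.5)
The plan is a proof by contradiction that reduces both claims to the amenability criteria already established. By Corollary~\ref{When-reduced-group-C-star-alg-is- app.amn}, approximate amenability of $C_r^*(\mathbb{F}_d)$ would force $\mathbb{F}_d$ to be amenable. By Remark~\ref{When-full-group-C-star-alg-is- app.amn}, approximate amenability of $C^*(\mathbb{F}_d)$ would force the same conclusion. It therefore suffices to show that $\mathbb{F}_d$ is not amenable for $d\ge 2$.

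The main step is that classical fact. First I will treat $d=2$ with the standard paradoxical-decomposition argument. Let $a,b$ be free generators. For $x\in\{a,a^{-1},b,b^{-1}\}$, let $W(x)$ be the set of reduced words beginning with $x$. Then
\[
\mathbb{F}_2=\{e\}\cup W(a)\cup W(a^{-1})\cup W(b)\cup W(b^{-1})
\]
is a disjoint union, and
\[
\mathbb{F}_2 = W(a)\cup aW(a^{-1}) = W(b)\cup bW(b^{-1}).
\]
Suppose $m$ is a left invariant mean on $\ell^\infty(\mathbb{F}_2)$ and set $\mu(E)=m(1_E)$. Then $\mu$ is a finitely additive, left-translation-invariant probability on subsets of $\mathbb{F}_2$. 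Applying $\mu$ to the two displayed identities gives
\[
1=\mu(W(a))+\mu(W(a^{-1}))=\mu(W(b))+\mu(W(b^{-1})).
\]
Adding these, the total measure of the four pieces is $2$, which contradicts the bound $1$ from the first decomposition.

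A convention check is needed here. The left invariant mean produced by Theorem~\ref{Gamma-covariant-implies-RIM} is invariant under $F\mapsto F\cdot\delta_g$, that is, $F\mapsto F(g\,\cdot)$ by Example~\ref{KIDS-example}. This matches $L_x$ from the Introduction, so the invariance is $\mu(g^{-1}E)=\mu(E)$ for all $g$. That is exactly what the argument above uses. The same paradox also works with right translations, by considering words ending in a given letter, so the convention does not matter.

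For general $d\ge 2$ I will argue the same way. One option is to run the identical paradoxical decomposition using two of the $d$ generators, lumping the remaining words into the identity piece. The other is to note that $\mathbb{F}_2$ embeds in $\mathbb{F}_d$ and that subgroups of amenable discrete groups are amenable: restrict the mean to functions constant on right cosets of $\mathbb{F}_2$. The direct decomposition is cleaner, since $W(a)$ and $aW(a^{-1})$ still cover $\mathbb{F}_d$.

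This gives the contradiction for both $C_r^*(\mathbb{F}_d)$ and $C^*(\mathbb{F}_d)$. The only real obstacle is this nonamenability step; everything else is immediate from the preceding corollary and remark.
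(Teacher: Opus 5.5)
Your proposal is correct and follows the paper's route. The paper gives no separate proof and treats the statement as immediate from the preceding corollary on $C_r^*(G)$ and the remark on $C^*(G)$, together with the classical nonamenability of $\mathbb{F}_d$. You go further and prove that nonamenability by the paradoxical decomposition; your convention check and the direct decomposition for $d>2$ are fine.

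One small slip appears in your aside, which you do not rely on. To pass a left invariant mean from $\mathbb{F}_d$ to the subgroup $\mathbb{F}_2$, fix a transversal $S$ of right cosets and extend $f\in\ell^\infty(\mathbb{F}_2)$ by $\tilde f(hs)=f(h)$. Functions constant on cosets do not work: they only carry information about the coset space, not about $\mathbb{F}_2$ itself.
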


We now generalize \cite[Propositions~4.1]{GZ_psd} and \cite[(iii)$\Rightarrow$(i) of Propositions~3.1]{GZ_psd} (see also \cite{Zhang-app-psd-amn}) to the quantum group setting.
\begin{thm} \label{appr-amn-lcptqg}

Let $\mathbb{G}=(\mathscr{M}, \Delta, \phi, \psi)$ be a locally compact quantum group and  $\Gamma\subseteq \mathcal{S}(\mathscr{M}_{*})^+$ be regular. Suppose that $\mathbb{G}$ is left (resp. right) covariant over $\Gamma$.
    \begin{enumerate}[(i)]
        \item If $\mathscr{M}_{*}$ is pseudo-amenable, then $\mathscr{M}$ has a RIM (resp. LIM).
        \item If $\mathscr{M}_{*}$ is approximately amenable and has a bounded approximate identity, then $\mathscr{M}$ has a RIM (resp. LIM). 
    \end{enumerate}
\end{thm}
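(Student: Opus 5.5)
Since $\Gamma$ is regular, $C_r^*(\mathbb{G})$ is unital and $\mathbb{G}$ is discrete. Still, I will avoid using a unit in $\mathscr{M}_*$ for part (i), and deduce part (ii) from part (i). For (ii): the introduction recalls that an approximately amenable Banach algebra with a bounded approximate identity is pseudo-amenable, so (ii) follows once (i) is proved. The plan for (i) is to mimic the proof of Theorem~\ref{Gamma-covariant-implies-RIM}, replacing the trace by the multiplicative functional $1\in\mathscr{M}$ (see \eqref{multiplicative-unit}).

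Treat the left covariant case; write $\langle f,1\rangle=f(1)$. Let $(u_i)\subset \mathscr{M}_*\otimes_\gamma\mathscr{M}_*$ be an approximate diagonal. Define $T:\mathscr{M}_*\otimes_\gamma\mathscr{M}_*\to\mathscr{M}_*$ by $T(a\otimes b)=a(1)\,b$. Then $\|T\|\le 1$, since $|a(1)|\le\|a\|$. Using \eqref{multiplicative-unit}, for $\omega\in\mathscr{M}_*$ one gets
\[
T(\omega\cdot(a\otimes b))=\omega(1)\,a(1)\,b,\qquad T((a\otimes b)\cdot\omega)=a(1)\,b\star\omega .
\]
Hence, with $\Omega_i=T(u_i)$,
\[
\|\Omega_i\star\omega-\omega(1)\Omega_i\|\le\|u_i\cdot\omega-\omega\cdot u_i\|\to0 .
\]
For the lower bound on norms, \eqref{multiplicative-unit} gives $\Omega_i(1)=\langle\pi(u_i),1\rangle$. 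Fix $\omega_0\in\Gamma$, which is a state. Then
\[
\langle\pi(u_i),1\rangle=\langle\pi(u_i)\star\omega_0,1\rangle\to\omega_0(1)=1 .
\]
So $\|\Omega_i\|\ge|\Omega_i(1)|\ge 1/2$ eventually. Normalizing, we may assume $\|\Omega_i\|=1$ while keeping $\|\Omega_i\star\omega-\Omega_i\|\to0$ for $\omega\in\Gamma$, because $\omega(1)=1$ there.

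Now take polar decompositions $\Omega_i=v_i\cdot\mu_i$, so $\mu_i=|\Omega_i|$ is a normal state. Lemma~\ref{essntial-inequality-quantum-covariant}(i) gives
\[
\|\mu_i\star\omega-\mu_i\|\le 2\|\Omega_i\star\omega-\Omega_i\|^{1/2}\to0\qquad(\omega\in\Gamma).
\]
Take a weak$^*$ cluster point $\mu$ of $(\mu_i)$ in the state space of $\mathscr{M}$. Norm convergence to $0$ passes to the limit: for each fixed $\omega$ and $F\in\mathscr{M}$, the map $\nu\mapsto\nu(\omega\cdot F)-\nu(F)$ is weak$^*$ continuous. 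This gives $\mu\star\omega=\mu=\omega(1)\mu$ for $\omega\in\Gamma$. By linearity the identity extends to $\operatorname{span}\Gamma$, and by norm density and continuity of $\omega\mapsto\mu\star\omega$ to all of $\mathscr{M}_*$. Thus $\mu$ is a RIM.

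For the right covariant case, use instead $T'(a\otimes b)=b(1)\,a$. This gives $\|\omega\star\Omega_i-\omega(1)\Omega_i\|\to0$, with the same normalization $\Omega_i(1)=\langle\pi(u_i),1\rangle$. Then apply Lemma~\ref{essntial-inequality-quantum-covariant}(ii) to produce a LIM.

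The main obstacle I expect is the normalization step. A pseudo-amenable algebra's diagonal need not satisfy $\pi(u_i)=1$, so the lower bound on $\|\Omega_i\|$ must come from $\pi(u_i)\star\omega_0\to\omega_0$ combined with multiplicativity of $1$. Everything after that follows the template of Theorem~\ref{Gamma-covariant-implies-RIM}.
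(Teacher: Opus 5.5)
Your proof is correct and is essentially the paper's argument: slice the approximate diagonal against the multiplicative functional $1$ on one leg, normalize using $\langle\pi(u_i),1\rangle\to1$, pass to $|\Omega_i|$ via Lemma~\ref{essntial-inequality-quantum-covariant}, take a weak$^*$ cluster point, and obtain (ii) from (i) through the bounded-approximate-identity result. Your pairing of the slice $a\otimes b\mapsto a(1)\,b$ with the left-covariant case is the consistent one; the paper's slice $(\mathrm{id}\otimes 1)$ gives $\omega\star f_\alpha\approx f_\alpha$, which really belongs to Lemma~\ref{essntial-inequality-quantum-covariant}(ii), right covariance and a LIM. Your use of a state $\omega_0\in\Gamma$ to justify $\langle\pi(u_i),1\rangle\to1$ also makes explicit a step the paper only asserts.
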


\bp
\begin{enumerate}[(i)]
    \item Suppose  that
    $$d_{\alpha}=\sum_{n=1}^{n_\alpha} a_{n}^{\alpha} \otimes b_{n}^{\alpha} \in \mathscr{M}_{*} \otimes \mathscr{M}_{*}$$
    is an approximate diagonal for $\mathscr{M}_{*}$. By Equation \eqref{multiplicative-unit}, $1 \in \mathscr{M}$ is a multiplicative linear functional on $\mathscr{M}_{*}$. Define 
    $$f_{\alpha} = (\mathrm{id} \otimes 1) d_{\alpha}=\sum_{n=1}^{n_\alpha} a_{n}^{\alpha} \langle 1, b_{n}^{\alpha} \rangle \in \mathscr{M}_{*}\,.$$

    Let $\omega \in \Gamma$. We have
$$(\mathrm{id} \otimes 1)(\omega \cdot d_{\alpha})=\sum_{n=1}^{n_\alpha}  \omega \star a_{n}^{\alpha} \langle 1, b_{n}^{\alpha} \rangle =\omega \star f_{\alpha}, $$
and
$$(\mathrm{id} \otimes 1)(d_{\alpha} \cdot \omega)=\sum_{n=1}^{n_\alpha}  a_{n}^{\alpha} \langle 1, b_{n}^{\alpha} \star \omega \rangle =\sum_{n=1}^{n_\alpha}  a_{n}^{\alpha} \langle 1, b_{n}^{\alpha}  \rangle \langle 1, \omega \rangle=f_{\alpha} \langle 1, \omega \rangle =f_{\alpha}.  $$

Hence,
$$\omega \star f_{\alpha}-f_{\alpha} = (\mathrm{id} \otimes 1)[ \omega \cdot d_{\alpha} - d_{\alpha} \cdot \omega] .$$
Denote the norm of $\scr M_*$ by $\|\cdot\|_1$. 
We derive
\begin{equation} \label{Enock-equation}
    \| \omega \star f_{\alpha}-f_{\alpha} \|_{1} =\| (\mathrm{id} \otimes 1)(\omega \cdot d_{\alpha} - d_{\alpha} \cdot \omega) \| 
    \leq  \| \omega \cdot d_{\alpha} - d_{\alpha} \cdot \omega \|_1 \longrightarrow 0
\end{equation}
Since,$(\pi (d_{\alpha}))$ is an approximate identity, we have
\begin{eqnarray*}
    \langle 1,  f_{\alpha} \rangle =  \langle 1,  \sum_{n=1}^{n_\alpha} a_{n}^{\alpha} \langle 1, b_{n}^{\alpha} \rangle \rangle = \sum_{n=1}^{n_\alpha} \langle 1, a_{n}^{\alpha}  \rangle  \langle 1, b_{n}^{\alpha} \rangle = \sum_{n=1}^{n_\alpha} \langle 1, a_{n}^{\alpha}  \star  b_{n}^{\alpha}  \rangle = \langle 1, \pi (d_{\alpha}) \rangle \longrightarrow 1\, .
\end{eqnarray*}

Thus, without loss of generality, we may assume that $\mathrm{inf} \{ \|f_{\alpha}\|_{1} \} > 0$. 
The absolute value fucntional $|f_{\alpha}|$ of $f_{\alpha}$ satisfies $\| ~|f_{\alpha}|~ \|_1=\|f_{\alpha} \|_1$. 
 Then,  $\omega_{\alpha} = \frac{|f_{\alpha}|}{\|f_{\alpha} \|_1} \in \mathscr{M}_{*}^{+}$ has norm equal to 1. By Lemma \ref{essntial-inequality-quantum-covariant}, we have
\begin{equation*} 
    \| \omega \star \omega_{\alpha}-\omega_{\alpha} \|_{1} 
    \leq \, 2\,  \| \omega \star f_{\alpha}-f_{\alpha} \|_{1}^{1/2} \longrightarrow 0
\end{equation*}
for all $\omega\in \Ga$ by Equation \eqref{Enock-equation}.

Then a standard argument (see the proof of Theorem~\ref{Gamma-covariant-implies-RIM}) concludes that a weak* cluster point $\mu \in \scr M^*$ of $(\omega_\alpha)$ is a RIM of $\scr M$.


\item Applying \cite[Proposition 10]{Zhang-app-psd-amn}, we obtain this part as a consequence of part (i).
\end{enumerate}
\ep

\bibliographystyle{unsrtnat}

\Addresses

\end{document}